\newtheorem{theorem}{Theorem}[section]
\theoremstyle{definition}
\newtheorem{proposition}[theorem]{Proposition}
\theoremstyle{remark}
\numberwithin{equation}{section}
\begin{document}

\title{Chebyshev's bias and generalized \\Riemann hypothesis}

\author{Adel Alamadhi$^*$}
\address{$*$MECAA, King Abdulaziz University, Jeddah, Saudi Arabia.}

\author{Michel Planat$\dag$}
\address{$\dag$Institut FEMTO-ST, CNRS, 32 Avenue de l'Observatoire, F-25044 Besan\c con, France.}
\email{michel.planat@femto-st.fr}

\author{Patrick Sol\'e$\ddag,^*$}
\address{$\ddag$Telecom ParisTech, 46 rue Barrault, 75634 Paris Cedex 13, France.}
\email{sole@telecom-paristech.fr}


\subjclass[2000]{Primary 11N13, 11N05; Secondary 11N37}

\date{December 8\normalfont, 2011 and, in revised form}


\keywords{Prime counting, Chebyshev functions, Riemann hypothesis}

\begin{abstract}

It is well known that $\mbox{li}(x)>\pi(x)$ (i) up to the (very large) Skewes' number $x_1 \sim 1.40 \times 10^{316}$ \cite{Bays00}. But, according to a Littlewood's theorem, there exist infinitely 
many $x$ that violate the inequality, due to the specific distribution of non-trivial zeros $\gamma$ of the Riemann zeta function $\zeta(s)$, encoded by the equation 
$\mbox{li}(x)-\pi(x)\approx \frac{\sqrt{x}}{\log x}[1+2 \sum_{\gamma}\frac{\sin (\gamma \log x)}{\gamma}]$ (1). If Riemann hypothesis (RH) holds, (i) may be replaced by the equivalent statement $\mbox{li}[\psi(x)]>\pi(x)$ (ii) due to Robin \cite{Robin84}.
A statement similar to (i) was found by Chebyshev that $\pi(x;4,3)-\pi(x;4,1)>0$ (iii) holds for any $x<26861$ \cite{Rubin94} (the notation $\pi(x;k,l)$ means the number of primes  up to $x$ and congruent to $l\mod k$).
The {\it Chebyshev's bias }(iii) is related to the generalized Riemann hypothesis (GRH) and occurs with a logarithmic density $\approx 0.9959$ \cite{Rubin94}.
In this paper, we reformulate the Chebyshev's bias for a general modulus $q$ as the inequality
 $B(x;q,R)-B(x;q,N)>0$ (iv), where $B(x;k,l)=\mbox{li}[\phi(k)*\psi(x;k,l)]-\phi(k)*\pi(x;k,l)$ is a counting function introduced in Robin's paper \cite{Robin84} and $R$( resp. $N$) is a quadratic residue modulo $q$ (resp. a non-quadratic residue). We investigate numerically the case $q=4$ and a few prime moduli $p$. Then, we proove that (iv) is equivalent to GRH for the modulus $q$.


\end{abstract}

\maketitle

\section{Introduction}

In the following, we denote $\pi(x)$ the prime counting function and $\pi(x;q,a)$ the number of primes not exceeding $x$ and congruent to $a\mod q$. The asymptotic law for the distribution of primes is the prime number theorem
$\pi(x)\sim \frac{x}{\log x}$. Correspondingly, one gets \cite[eq. (14), p. 125]{Davenport80} 
\begin{equation}
\pi(x;q,a)\sim \frac{\pi(x)}{\phi(q)}
\label{eq1}
\end{equation}
that is, one expects the same number of primes in each residue class $a \mod q$, if $(a,q)=1$. Chebyshev's bias is the observation that, contrarily to expectations, $\pi(x;q,N)>\pi(x;q,R)$ most of the times, when $N$ is a non-square modulo $q$, but $R$ is. 

Let us start with the bias
\begin{equation}
\delta(x,4):=\pi(x;4,3)-\pi(x;4,1)
\label{eq2}
\end{equation}
found between the number of primes in the non-quadratic residue class $N=3 \mod 4$ and the number of primes in the quadratic one $R=3 \mod 4$. The values $\delta(10^n,4)$, $n\le 1$, form the increasing sequence 
$$A091295=\{1,~2,~7,~10,~25,~147,~218,~446,~551,~5960,\ldots\}.$$
The bias is found to be negative in thin zones of size 
$$\{2,~410,~15~358,~41346,~42~233~786,~416~889~978,\ldots\}$$
spread over the location of primes of maximum negative bias \cite{Bays1979}
$$\{26861,~623~681,~12~366~589,~951~867~937,~6~345~026~833,~18~699~356~321\ldots\}.$$

It has been proved that there are are infinitely many sign changes in the Chebyshev's bias (\ref{eq2}). This follows from the Littlewood's oscillation theorem \cite{Deleglise2000,I}
\begin{equation}
\delta(x,4):=\Omega_{\pm}\left(\frac{x^{1/2}}{\log x}\log_3x\right).
\label{eq3}
\end{equation}
A useful measure of the Chebyshev's bias is the logarithmic density \cite{Rubin94,Deleglise2000,Fiorilli2011}
\begin{equation}
d(A)=\lim_{x \rightarrow \infty} \frac{1}{\log x}\sum_{a\in A, a\le x}\frac{1}{a}
\label{eq4}
\end{equation}
for the positive $\Delta^+$ and negative $\Delta^-$ regions calculated as $d(\Delta^+)=0.9959$ and $d(\Delta^-)=0.0041$.

In essence, Chebyshev's bias $\delta(x,4)$ is similar to the bias
\begin{equation}
\delta(x):=\mbox{Li}(x)-\pi(x).
\label{eq5}
\end{equation}
It is known that $\delta(x)>0$ up to the (very large) Skewes' number $x_1\sim 1.40 \times 10^{316}$ but, according to Littlewood's theorem, there also are infinitely many sign changes of $\delta(x)$ \cite{I}.

The reason why the asymmetry in (\ref{eq5}) is so much pronounced is encoded in the following approximation of the bias \cite{Rubin94,Bays2001}\footnote{The bias may also be approached in a different way by relating it to the second order Landau-Ramanujan constant \cite{Moree2003}.}
\begin{equation}
\delta(x)\sim\frac{\sqrt{x}}{\log x} \left(1+2\sum_{\gamma}\frac{\sin(\gamma \log x+\alpha_{\gamma})}{\sqrt{1/4+\gamma^2}}\right),
\label{eq6}
\end{equation}
where $\alpha_{\gamma}=\cot^{-1}(2\gamma)$ and $\gamma$ is the imaginary part of the non-trivial zeros of the Riemann zeta function $\zeta(s)$. The smallest value of $\gamma$ is quite large, $\gamma_1\sim 14.134$, and leads to a large asymmetry in (\ref{eq5}).

Under the assumption that the generalized Riemann hypothesis (GRH) holds that is, if the Dirichlet L-function with non trivial real character $\kappa_4$
\begin{equation}
L(s,\kappa_4)=\sum_{n\ge 0}\frac{(-1)^n}{(2n+1)^s},
\label{eq7}
\end{equation}
has all its non-trivial zeros located on the vertical axis $\Re(s)=\frac{1}{2}$, then the formula (\ref{eq6}) also holds for the Chebyshev's bias $\delta(x,4)$. The smallest non-trivial zero of $L(s,\kappa_4)$ is at $\gamma_1\sim 6.02$, a much smaller value than than the one corresponding to $\zeta(s)$, so that the bias is also much smaller.

A second factor controls the aforementionned assymmetry of a $L$-function of real non-trivial character $\kappa$, it is  the {\it variance} \cite{Martin2000}
\begin{equation}
V(\kappa)=\sum_{\gamma>0}\frac{2}{1/4+\gamma^2}.
\label{eq8}
\end{equation}
For the function $\zeta(s)$ and $L(s,\kappa_4)$ one gets $V=0.045$ and V=0.155, respectively. 

\subsection*{Our main goal}

In a groundbreaking paper, Robin reformulated the unconditional bias (\ref{eq5}) as a conditional one involving the second Chebyshev function $\psi(x)=\sum_{p^k\le x} \log p$
\begin{equation}
~\mbox{The}~\mbox{equality}~\delta'(x):=\mbox{li}[\psi(x)]-\pi(x)>0 ~\mbox{is}~\mbox{equivalent}~\mbox{to}~\mbox{RH}.
\label{eq9}
\end{equation}
This statement is given as Corollary 1.2 in \cite{PlanatSole2011} and led the second and third author of the present work  to derive a {\it good prime counting function} 
\begin{equation}
\pi(x)=\sum_{n=1}^3 \mu(n) \mbox{li}[\psi(x)^{1/n}].
\label{eq10}
\end{equation}

Here, we are interested in a similar method to {\it regularize} the Chebyshev's bias in a conditional way similar to (\ref{eq9}). In \cite{Robin84}, Robin introduced the function
\begin{equation}
B(x;q,a)=\mbox{li}[\phi(q)\psi(x;q,a)]-\phi(q)\pi(x;q,a),
\label{eq11}
\end{equation}
that generalizes (\ref{eq9}) and applies it to the residue class $a \mod q$, with $\psi(x,q,a)$ the generalized second Chebyshev's function. Under GRH  he proved that \cite[Lemma 2, p. 265]{Robin84}
\begin{equation}
B(x;q,a)=\Omega_{\pm} \left(\frac{\sqrt{x}}{\log^2 x}\right),
\label{eq12}
\end{equation}
that is 
\begin{equation}
\mbox{The} ~\mbox{inequality}~B(x;q,a)>0 ~\mbox{is}~\mbox{equivalent}~\mbox{to}~\mbox{GRH}.
\label{eq13}
\end{equation}

For the Chebyshev's bias, we now need a proposition taking into account two residue classes such that  $a=N$(a non-quadratic residue) and $a=R$ (a quadratic one).
\begin{proposition}\label{Chebbias}
Let $B(x;q,a)$ be the Robin $B$-function defined in (\ref{eq11}), and $R$ (resp. $N$) be a quadratic residue modulo $q$ (resp. a non-quadratic residue), then 
the statement $\delta'(x,q):=B(x;q,R)-B(x;q,N)>0,~ \forall x$ (i), is equivalent to GRH for the modulus $q$.
\end{proposition}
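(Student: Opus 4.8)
The plan is to pass from $B(x;q,a)$ to the explicit formula in the non-trivial zeros of the Dirichlet $L$-functions modulo $q$, and to locate a definite-sign secondary term coming from prime squares. Writing $\psi(x;q,a)=\frac{1}{\phi(q)}\sum_{\chi}\bar\chi(a)\psi(x,\chi)$ by orthogonality of the characters modulo $q$ and separating the principal character $\chi_0$, under GRH one has
\[
\phi(q)\psi(x;q,a)=x+E_a(x),\qquad E_a(x)=-\sum_{\chi\neq\chi_0}\bar\chi(a)\sum_{\rho_\chi}\frac{x^{\rho_\chi}}{\rho_\chi}+O(\log^2x),
\]
whence, linearising the logarithmic integral, $\mathrm{li}[\phi(q)\psi(x;q,a)]=\mathrm{li}(x)+E_a(x)/\log x+O(\log^2x)$. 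In parallel, the Riemann--von Mangoldt formula for the weighted prime-power count, combined with Möbius inversion over prime powers, yields
\[
\phi(q)\pi(x;q,a)=\mathrm{li}(x)-\sum_{\chi\neq\chi_0}\bar\chi(a)\sum_{\rho_\chi}\mathrm{li}(x^{\rho_\chi})-\frac{\nu_a}{2}\,\mathrm{li}(\sqrt x)+\cdots,
\]
where $\nu_a=\#\{b\bmod q:\ b^2\equiv a,\ (b,q)=1\}$ counts the square roots of $a$, and the omitted terms come from cubes and higher powers, all of order at most $x^{1/3}$.

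Subtracting the two expansions, the decisive step is the cancellation already exploited for (\ref{eq9}): since $\mathrm{li}(x^{\rho_\chi})=\frac{x^{\rho_\chi}}{\rho_\chi\log x}\big(1+O(1/\log x)\big)$, the term $E_a(x)/\log x$ cancels $\sum_{\chi\neq\chi_0}\bar\chi(a)\sum_{\rho_\chi}\mathrm{li}(x^{\rho_\chi})$ to leading order, and one is left with
\[
B(x;q,a)=\frac{\nu_a}{2}\,\mathrm{li}(\sqrt x)+R_a(x),
\]
where under GRH the remainder $R_a(x)$ is the residual oscillation governed by (\ref{eq12}), of size $\sqrt x/\log^2x$. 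The structural heart of the argument is that prime squares $p^2\equiv b^2$ can only fall in residue classes that are themselves squares: a quadratic residue $R$ has $\nu_R\geq1$, whereas a non-residue $N$ has $\nu_N=0$. Therefore
\[
\delta'(x,q)=B(x;q,R)-B(x;q,N)=\frac{\nu_R}{2}\,\mathrm{li}(\sqrt x)+\big(R_R(x)-R_N(x)\big),
\]
whose main term $\frac{\nu_R}{2}\mathrm{li}(\sqrt x)\sim \nu_R\sqrt x/\log x$ beats the oscillation $R_R-R_N=O(\sqrt x/\log^2x)$ by a factor $\log x$. This gives $\delta'(x,q)>0$ for all large $x$ under GRH; the residual finite range is disposed of by the direct computation carried out below for $q=4$ and the small prime moduli, which secures the quantifier $\forall x$.

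For the converse I would argue contrapositively, in the spirit of Littlewood's oscillation theorem (\ref{eq3}). If GRH fails modulo $q$, some $L(s,\chi)$ has a zero $\rho_0=\beta_0+i\gamma_0$ with $\beta_0>\tfrac12$; this zero feeds into $\delta'(x,q)$ a term of order $x^{\beta_0}/\log x$ weighted by $\bar\chi(R)-\bar\chi(N)$, which overwhelms the secondary main term $\sqrt x/\log x$, and a Landau-type argument then forces $\delta'(x,q)$ to be negative for arbitrarily large $x$, contradicting $\delta'(x,q)>0$. The point requiring care is that this contribution survives in the difference only when $\bar\chi(R)\neq\bar\chi(N)$; it is the real (quadratic) character $\kappa_q$, for which $\kappa_q(R)-\kappa_q(N)=2$, that always carries the bias. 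For $q=4$ the only non-principal character is precisely $\kappa_4$, so the equivalence with GRH is clean; for a general modulus the difference primarily tracks $\kappa_q$, and detecting a violation lodged on a character with $\chi(R)=\chi(N)$ requires, in addition, varying the pair $(R,N)$.

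The main obstacle is exactly this interplay between the definite-sign prime-square term and the oscillation, made uniform in $x$. Three points must be nailed down: (a) the Robin cancellation of the two zero-sums needs explicit control of the truncation of the explicit formula and of the quadratic error in $\mathrm{li}(x+E_a)$, uniformly on the relevant range; (b) the $\Omega_{\pm}$ estimate (\ref{eq12}) must be strengthened into a genuine upper envelope for $R_R-R_N$ so that the prime-square term provably dominates; and (c) the converse must be phrased so that a GRH-violating zero is never invisible in the difference, which is what ties the clean statement to the quadratic character. By contrast, the bookkeeping of the square-root counts $\nu_a$ and the estimation of the cube and higher-power contributions are routine.
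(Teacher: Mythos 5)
Your forward direction (GRH $\Rightarrow$ positivity) is in substance the argument of the paper. The paper reduces Proposition \ref{Chebbias} to the regularized counting functions $\pi'(x;q,a)=\pi(x;q,a)-\psi(x;q,a)/\log x$ and then quotes the Rubinstein--Sarnak expansion of $\pi(x;q,a)$ under GRH, in which the secondary term carries the coefficient $c(q,a)=-1+\#\{b:b^2\equiv a \bmod q\}=\nu_a-1$; the zero sums cancel between the $\psi$-term and the $\pi$-term exactly as in your ``Robin cancellation'', and the surviving main term is $(c(q,R)-c(q,N))\sqrt x/\log x=\nu_R\sqrt x/\log x$, which is identical to your $\tfrac{\nu_R}{2}\,\mathrm{li}(\sqrt x)$. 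Your points (a) and (b) (truncation of the explicit formula, an upper envelope $O(\sqrt x/\log^2 x)$ for the residual oscillation) are handled in the paper by citing Robin's Lemma 2 rather than reproved. One caveat you state and the paper does not: the asymptotic only yields $\delta'(x,q)>0$ for all sufficiently large $x$, so the quantifier $\forall x$ in the proposition requires a separate finite verification; you are right to flag this.

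The converse is where the two arguments genuinely diverge, and where your point (c) names a real gap --- one that the paper's own proof does not close either. The paper argues from $B(x;q,a)=\Omega_\pm(x^{\xi})$ for each residue class separately and then runs a case analysis on a pair $(x_1,x_2)$ of extremal points; but the inequality (\ref{in1}) compares $B$ at two different arguments, and the subsequent cases do not legitimately produce a single $x$ at which $B(x;q,R)-B(x;q,N)<0$, precisely because the large oscillations of $B(x;q,R)$ and of $B(x;q,N)$ may be synchronized. As you observe, this happens exactly when the offending zero lies on a character $\chi$ with $\chi(R)=\chi(N)$: its contribution cancels in the difference and is invisible to statement (i) for that fixed pair $(R,N)$. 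So the equivalence as literally stated is only clean when every non-principal character separates $R$ from $N$ (e.g.\ $q=4$, or the Legendre-symbol average (\ref{eq15}) for prime modulus); for a general modulus and a single pair $(R,N)$ one must either weaken the conclusion to GRH for the characters with $\chi(R)\ne\chi(N)$ or vary the pair, as you suggest. Making your Landau-type argument precise for those surviving characters would actually repair the converse; the paper's case analysis, as written, does not.
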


The present paper deals about the numerical justification of proposition \ref{Chebbias} in Sec. \ref{regularizedCB} and its proof in Sec. \ref{proofCB}. The calculations are performed with the software Magma \cite{Magma} available on a $96$ MB segment of the cluster at the University of Franche-Comt\'e.

\section{The regularized Chebyshev's bias}
\label{regularizedCB}

All over this section, we are interested in the prime champions of the Chebyshev's bias $\delta(x,q)$ (as defined in (\ref{eq2}) or (\ref{eq14}), depending on the context). We separate the prime champions leading to a positive/negative bias. Thus, the $n$-th prime champion satisfies
\begin{equation}
\delta^{(\epsilon)}(x_n,q)=\epsilon n,~\epsilon=\pm 1.
\label{bias}
\end{equation}
We also introduce a new measure of the overall bias $b(q)$, dedicated to our plots, as follows
\begin{equation}
b(q)=\sum_{n,\epsilon}\frac{\delta^{(\epsilon)}(x_n,q)}{x_n}.
\label{biassum}
\end{equation}
Indeed, smaller is the bias lower is the value of $b(q)$.
Anticipating over the results presented below, Table \ref{table1} summarize the calculations.

\begin{table}[ht]
\caption{The new bias (\ref{biassum}) (column 2) and the standard logarithmic density (\ref{eq4}) (column 3). }\label{table1}
\renewcommand\arraystretch{1.5}
\noindent\[
\begin{array}{|c|c|c|c|c|}
\hline
\mbox{modulus}~ q  & \mbox{bias}~ b(q) & \mbox{log}~\mbox{density}~d(\Delta^+)& \mbox{first}~\mbox{zero}~\gamma_1\\
\hline
4 & 0.7926 & 0.9959~ \mbox{\cite{Rubin94}}&14.134\\
11 & 0.1841 & 0.9167~ \mbox{\cite{Rubin94}}&0.2029\\
13 & 0.2803 & 0.9443~ \mbox{\cite{Rubin94}}&3.119\\
163 & 0.0809 & 0.55 ~\mbox{\cite{Bays2001}}&2.477 \\
\hline
\end{array}
\]
\end{table}
\begin{figure}
\centering
\includegraphics[]{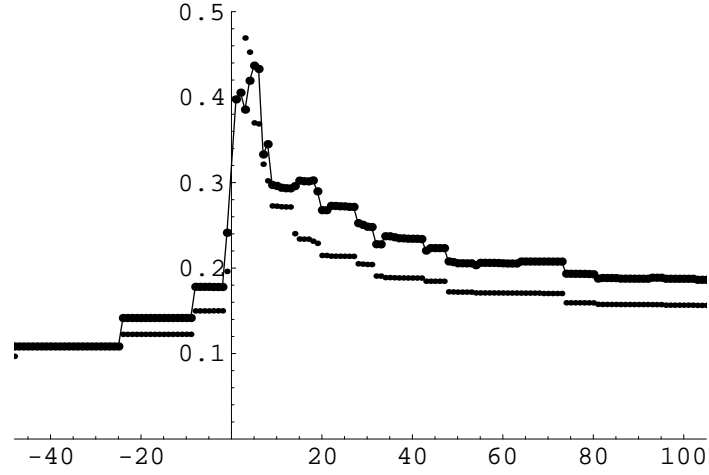}
\caption{The normalized regularized bias $\delta'(x,4)/\sqrt{x}$ versus the Chebyshev's bias $\delta(x,4)$ at the prime champions of $\delta(x,4)$ (when $\delta(x,4)>0$) and at the prime champions of $-\delta(x,4)$ (when $\delta(x,4)<0$). The extremal prime champions in the plot are $x=359327$ (with $\delta=105$) and $x=951867937$ (with $\delta=-48$). The curve is asymmetric around the vertical axis, a fact that reflects the asymmetry of the Chebyshev's bias. As explained in the text, a violation of GRH would imply a negative value of the regularized bias $\delta'(x,4)$. The small dot curve corresponds to the fit of $\delta'(x,4)/\sqrt{x}$ by $2/\log x$ calculated in Sec. 3.}
\label{fig1}
\end{figure}

\subsection*{Chebyshev's bias for the modulus $q=4$}

As explained in the introduction, our goal in this paper is to reexpress a standard Chebyshev's bias $\delta(x,q)$ into a regularized one $\delta'(x,q)$, that is always positive under the condition that GRH holds. Indeed we do not discover any numerical violation of GRH and we always obtains a positive $\delta'(x,q)$. The asymmetry of Chebyshev's bias arises in the plot $\delta$ vs $\delta'$, where the fall of the normalized bias $\frac{\delta}{\sqrt{x}}$ is faster for negative values of $\delta$ than for positive ones. Fig. \ref{fig1} clarifies this effect for the historic modulus $q=4$. We restricted our plot to the champions of the bias $\delta$ and separated positive and negative champions.

\subsection*{Chebyshev's bias for a prime modulus $p$}

For a prime modulus $p$, we define the bias so as to obtain an averaging over all differences $\pi(x;p,N)-\pi(x;p,R)$, where as above $N$ and $R$ denote a non-quadratic and a quadratic residue, respectively
\begin{equation}
\delta(x,p)=-\sum_a \left(\frac{a}{p}\right)\pi(x;p,a),
\label{eq14}
\end{equation}
where $\left(\frac{a}{p}\right)$ is the Legendre symbol. Correspondingly, we define the regularized bias as
\begin{equation}
\delta'(x,p)=\frac{1}{\left\lfloor p/2\right\rfloor} \sum_a \left(\frac{a}{p}\right) B(x;p,a).
\label{eq15}
\end{equation}
\begin{proposition}\label{Chebbiasp}
Let $p$ be a selected prime modulus and $\delta'(x,p)$ as in (\ref{eq15}) then
the statement $\delta'(x,p)>0,~ \forall x $,  is equivalent to GRH for the modulus $p$.
\end{proposition}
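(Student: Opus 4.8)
The plan is to reduce the averaged statement (\ref{eq15}) to the single-character situation already handled in Proposition \ref{Chebbias}, using the one algebraic fact peculiar to a prime modulus: the Legendre symbol $\left(\frac{a}{p}\right)$ is exactly the real nonprincipal (quadratic) Dirichlet character $\chi_2$ modulo $p$. Writing $\psi(x,\chi)=\sum_{n\le x}\Lambda(n)\chi(n)$ with $\Lambda$ the von Mangoldt function and using orthogonality in the decomposition $\psi(x;p,a)=\frac{1}{\phi(p)}\sum_{\chi}\overline{\chi}(a)\psi(x,\chi)$, one gets at once
\begin{equation}
\sum_a\left(\frac{a}{p}\right)\psi(x;p,a)=\psi(x,\chi_2),\qquad \sum_a\left(\frac{a}{p}\right)\pi(x;p,a)=\sum_{\ell\le x}\left(\frac{\ell}{p}\right)=-\delta(x,p),
\label{eq:collapse}
\end{equation}
the last equality being (\ref{eq14}). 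First I would record these collapses, since they concentrate the whole averaged bias onto the arithmetic of the single $L$-function $L(s,\chi_2)$.

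The obstacle is that $B$ in (\ref{eq11}) contains $\mbox{li}[\phi(p)\psi(x;p,a)]$, so the nonlinearity of $\mbox{li}$ prevents inserting (\ref{eq:collapse}) directly. I would therefore split $\phi(p)\psi(x;p,a)=M(x)+E_a(x)$ into the $a$-independent main part $M(x)=\psi(x,\chi_0)\sim x$ (with $\chi_0$ the principal character) and the fluctuation $E_a(x)=\sum_{\chi\ne\chi_0}\overline{\chi}(a)\psi(x,\chi)$, and Taylor-expand $\mbox{li}(M+E_a)=\mbox{li}(M)+E_a/\log M-E_a^2/(2M\log^2 M)+\cdots$. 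Summing against $\chi_2(a)$, the constant term vanishes because $\sum_a\chi_2(a)=0$, the linear term contracts to $\phi(p)\psi(x,\chi_2)/\log M$ by orthogonality, and the quadratic and higher terms survive only on character pairs with $\chi\chi'=\chi_2$ and are of strictly lower order under GRH. Combining the linear term with the $-\phi(p)\sum_a\chi_2(a)\pi(x;p,a)$ piece of $B$, the oscillatory contributions of the zeros of $L(s,\chi_2)$ cancel, and there remains only the prime-square term $\phi(p)\,\vartheta(\sqrt x)/\log x$, where $\vartheta(y)=\sum_{\ell\le y}\log\ell\sim y$. After division by $\lfloor p/2\rfloor=(p-1)/2$ this reproduces the fitted main term $\delta'(x,p)/\sqrt x\sim 2/\log x$ of Fig. \ref{fig1}.

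With $\delta'(x,p)$ thus reduced to a single-character object of exactly the kind treated in Proposition \ref{Chebbias}, I would conclude by the dichotomy (\ref{eq12})--(\ref{eq13}). If GRH holds for the modulus $p$, then $\psi(x,\chi)=O(\sqrt x\log^2 x)$ for every $\chi\ne\chi_0$, so the linear fluctuation is $O(\sqrt x/\log^2 x)$ after the cancellation above and the nonlinear terms are smaller still; all are dominated by the positive main term $2\sqrt x/\log x$, whence $\delta'(x,p)>0$. Conversely, if GRH fails there is a zero $\rho_0=\beta_0+i\gamma_0$ with $\beta_0>\tfrac12$; feeding it through the explicit formula and invoking Landau's oscillation theorem yields an $\Omega_{\pm}(x^{\beta_0})$ term in the fluctuation, which eventually overwhelms $2\sqrt x/\log x$ and forces $\delta'(x,p)$ to change sign, contradicting $\delta'(x,p)>0$ for all $x$.

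I expect the crux to be twofold. First, the Taylor truncation of $\mbox{li}$ must be justified uniformly in $a$, i.e.\ one must bound the remainder $\sum_a\chi_2(a)E_a^2/(M\log^2 M)$ and the higher terms so that they provably stay below the main term; this is where the error control in Robin's Lemma 2, quoted in (\ref{eq12}), is really used. Second, and more delicate, the aggregate (\ref{eq15}) only sees the quadratic character at the linear level, so the clean rigorous equivalence is between $\delta'(x,p)>0$ and RH for $L(s,\chi_2)$ --- which for $q=4$, where $\chi_2=\kappa_4$ is the unique nonprincipal character, is precisely GRH. Upgrading this to GRH for all characters modulo $p$ must be extracted from the nonlinear terms with $\chi\chi'=\chi_2$, and making that step rigorous is the point I would expect to require the most care.
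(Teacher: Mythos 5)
Your forward direction (GRH implies $\delta'(x,p)>0$) is essentially the paper's argument in different clothing: the paper linearizes $\mbox{li}$ via Robin's approximation (\ref{equiv}) and inserts the Rubinstein--Sarnak expansion of $\pi(x;q,a)$, in which the term $c(p,a)=\left(\frac{a}{p}\right)$ --- exactly your prime-square contribution $\vartheta(\sqrt{x})$ --- produces the main term $2\sqrt{x}/\log x$ of (\ref{regbias}). Your orthogonality collapse onto the single quantity $\psi(x,\chi_2)$ is a cleaner way of organizing the same cancellation, and your identification of the surviving main term is correct.

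The genuine difficulty is the converse, and you have correctly put your finger on it in your last paragraph, though you underestimate how serious it is. At the linear level the average (\ref{eq15}) is a functional of $L(s,\chi_2)$ alone: for every character $\chi\notin\{\chi_0,\chi_2\}$ the contribution of $\psi(x,\chi)$ is annihilated by $\sum_a\chi_2(a)\overline{\chi}(a)=0$. Your hope of recovering the remaining characters from the quadratic term of the Taylor expansion of $\mbox{li}$ cannot succeed quantitatively: a zero of $L(s,\chi)$, $\chi\ne\chi_2$, with real part $\beta_0$ contributes $O(x^{2\beta_0-1}/\log^2 x)$ there, which is dominated by the main term $2\sqrt{x}/\log x$ whenever $\beta_0\le 3/4$. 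A violation of GRH by a non-quadratic character with $\tfrac12<\beta_0\le\tfrac34$ is therefore invisible to $\delta'(x,p)$, so the implication from positivity of $\delta'(x,p)$ to GRH for the full modulus $p$ is not obtained; what your argument actually delivers is equivalence with RH for the single $L$-function $L(s,\chi_2)$ (for which the Landau oscillation step does work, since $\psi(x,\chi_2)$ enters the linear term with coefficient $1/\log x$). You should state the result in that weaker form. For comparison, the paper does not close this gap either: its entire proof of Proposition \ref{Chebbiasp} is the assertion that it ``follows as a straightforward consequence of Proposition \ref{Chebbias}'', which at best gives the forward direction (an average of positive differences is positive); in the converse direction, a sign change of one particular difference $B(x;q,R)-B(x;q,N)$ does not force the average over all residues to change sign. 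Your diagnosis of the obstruction is sharper than anything in the paper.
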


As mentioned in the introduction, the Chebyshev's bias is much influenced by the location of the first non-trivial zero of the function $L(s,\kappa_q)$, $\kappa_q$ being the real non-principal character modulo $q$. This is especially true for $L(s,\kappa_{163})$ with its smaller non-trivial zero at $\gamma\sim 0.2029$ \cite{Bays2001}. The first negative values occur at $\{15073,15077,15083,\ldots\}$.

Fig. \ref{fig2} represents the Chebyshev's bias $\delta'$ for the modulus $q=163$ versus the standard one $\delta$ (thick dots). Tha asymmetry of the Chebyshev's bias is revealed at small values of $|\delta|$ where the the fit of the regularized bias by the curve $2/\log x$ is not good (thin dots). 

\begin{figure}
\centering
\includegraphics[]{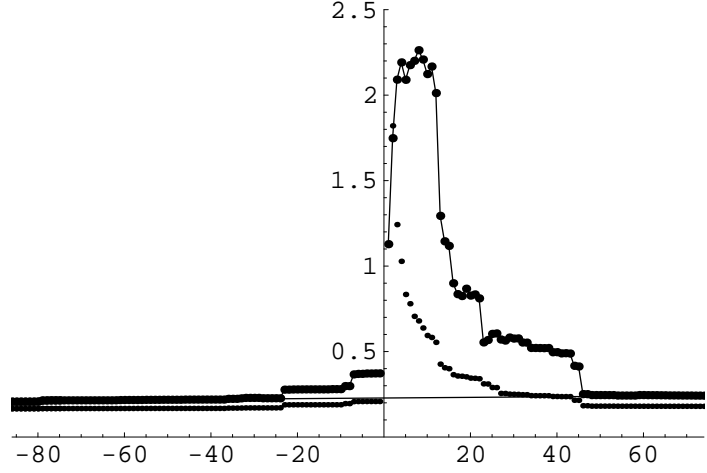}
\caption{The normalized regularized bias $\delta'(x,163)/\sqrt{x}$ versus the Chebyshev's bias $\delta(x,163)$ at all the prime champions of $|\delta(x,163)|$ [from $|\delta(x,163)|>74$ the bias is $\delta(x,163)<0$ negative], superimposed to the curve at the prime champions of $-\delta(x,163)$ (when $\delta(x,163)<0$). The extremal prime champions in the plot are $x=68491$ (with $\delta=74$) and $x=174637$ (with $\delta=-86$).  The asymmetry is still clearly visible in the range of small values of $|\delta|$ but tends to disappear in the range of high values of $|\delta|$. The small dot curve corresponds to the fit of $\delta'(x,163)/\sqrt{x}$ by $2/\log x$ calculated in Sec. 3.}
\label{fig2}
\end{figure}

For the modulus $q=13$, the imaginary part of the first zero is not especially small, $\gamma_1\sim 3.119$, but the variance (\ref{eq8}) is quite high, $V(\kappa_{-13})\sim 0.396$. The first negative values of $\delta(x,13)$ at primes occur when $\{2083,2089,10531,\ldots\}$. Fig. \ref{fig3} represents the Chebyshev's bias $\delta'$ for the modulus $q=13$ versus the standard one $\delta$ (thick dots) as compared to the fit by $2/\log x$ (thin dots). 

Finally, for the modulus $q=11$, the imaginary part of the first zero is quite small, $\gamma_1\sim 0.209$, and the variance is high, $V(\kappa_{-11})\sim 0.507$. In such a case, as shown in Fig. \ref{fig4}, the approximation of the regularized bias by $2/\log x$ is good in the whole range of values of $x$.

\begin{figure}
\centering
\includegraphics[]{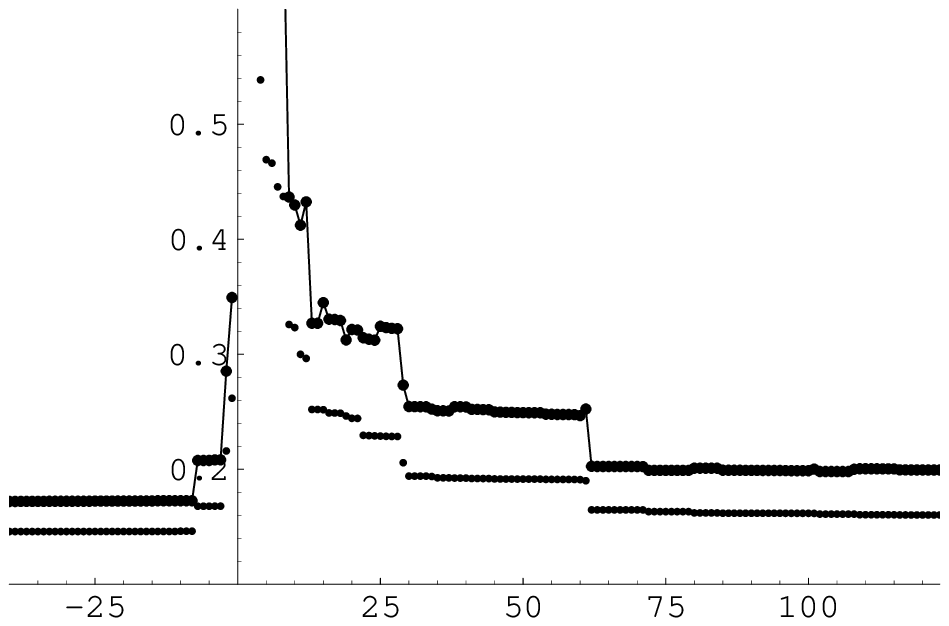}
\caption{The normalized regularized bias $\delta'(x,13)/\sqrt{x}$ versus the Chebyshev's bias $\delta(x,13)$ at the prime champions of $\delta(x,13)$ (when $\delta(x,13)>0$), and the curve at the prime champions of $-\delta(x,13)$ (when $\delta(x,13)<0$). The extremal prime champions in the plot are $x=263881$ (with $\delta=123$) and $x=905761$ (with $\delta=-40$). The small dot curve corresponds to the fit of $\delta'(x,13)/\sqrt{x}$ by $2/\log x$ calculated in Sec. 3.}
\label{fig3}
\end{figure}
\begin{figure}
\centering
\includegraphics[]{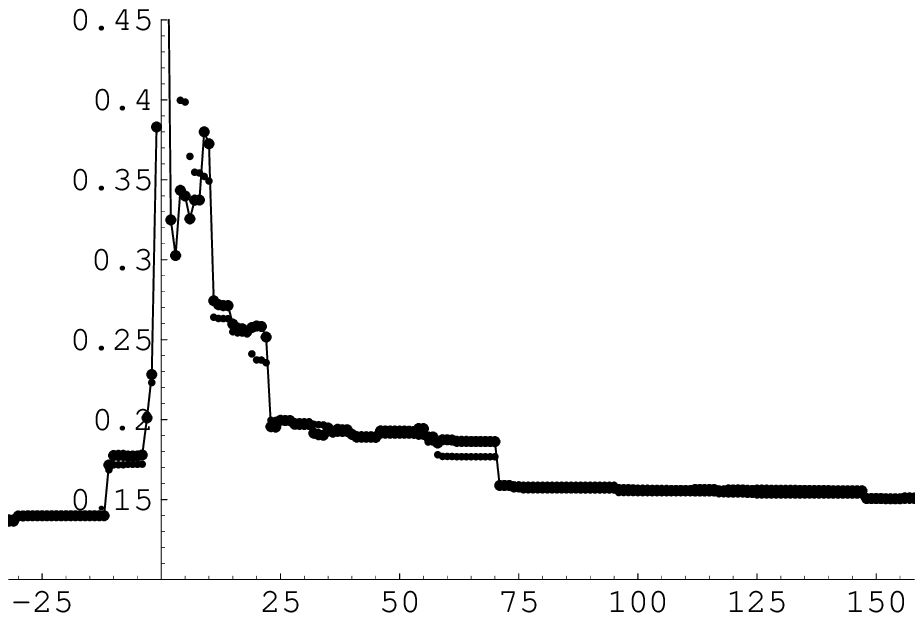}
\caption{The normalized regularized bias $\delta'(x,11)/\sqrt{x}$ versus the Chebyshev's bias $\delta(x,11)$ at the prime champions of $\delta(x,11)$ (when $\delta(x,11)>0$), and the curve at the prime champions of $-\delta(x,11)$ (when $\delta(x,11)<0$). The extremal prime champions in the plot are $x=638567$ (with $\delta=158$) and $x=1867321$ (with $\delta=-32$).The small dot curve corresponds to the (very good) fit of $\delta'(x,11)/\sqrt{x}$ by $2/\log x$ calculated in Sec. 3.}
\label{fig4}
\end{figure}

\section{Proof of proposition \ref{Chebbias}}
\label{proofCB}

For approaching the proposition \ref{Chebbias} we reformulate it in a simpler way as
\begin{proposition}\label{Regbias}
One introduces the regularized couting function $\pi'(x;q,l):=\pi(x;q,l)-\psi(x;q,l)/\log x$. The statement $\pi'(x;q,N)>\pi'(x;q,R),~ \forall x$ (ii),  is equivalent to GRH for the modulus $q$.
\end{proposition}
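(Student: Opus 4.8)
The plan is to prove Proposition \ref{Regbias} first and then recover Proposition \ref{Chebbias} from it, since the two are linked by an elementary estimate on the logarithmic integral. I would open by writing
\begin{equation*}
B(x;q,R)-B(x;q,N)=\mbox{li}[\phi(q)\psi(x;q,R)]-\mbox{li}[\phi(q)\psi(x;q,N)]-\phi(q)\bigl[\pi(x;q,R)-\pi(x;q,N)\bigr],
\end{equation*}
and treating the difference of the two logarithmic integrals by the exact identity $\mbox{li}(Y)-\mbox{li}(Z)=\int_{Z}^{Y}dt/\log t$. Because $\psi(x;q,a)\sim x/\phi(q)$ for every $a$ coprime to $q$, both arguments are asymptotic to $x$, differing from it by $O(\sqrt{x}\log^2 x)$ under GRH, so $1/\log t$ is essentially constant equal to $1/\log x$ across the interval. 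This gives
\begin{equation*}
B(x;q,R)-B(x;q,N)=\phi(q)\bigl[\pi'(x;q,N)-\pi'(x;q,R)\bigr]+E(x),
\end{equation*}
with $E(x)$ of lower order than the main term identified below; hence positivity statement (i) of Proposition \ref{Chebbias} is equivalent to positivity statement (ii) of Proposition \ref{Regbias}, and it suffices to treat the latter.

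Next I would isolate the source of the bias. Writing $\psi(x;q,l)=\theta(x;q,l)+[\psi-\theta](x;q,l)$ and using partial summation to pass from $\theta$ to $\pi$, the smooth main terms $t/\phi(q)$ cancel in the difference, leaving
\begin{equation*}
\pi'(x;q,N)-\pi'(x;q,R)=\frac{[\psi-\theta](x;q,R)-[\psi-\theta](x;q,N)}{\log x}+I(x),
\end{equation*}
where $I(x)$ collects the oscillating $\theta$-integrals. The decisive point is arithmetic: the square $p^2$ of a prime is always a quadratic residue modulo $q$, so the dominant $k=2$ part of $[\psi-\theta](x;q,N)$ vanishes for a non-residue $N$, whereas $[\psi-\theta](x;q,R)=\sum_{p\le\sqrt{x},\,p^2\equiv R}\log p\sim\sqrt{x}$ for a residue $R$ with two square roots. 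This produces a positive main term $\sim 2\sqrt{x}/\log x$, in agreement with the empirical fit of $\delta'(x,q)/\sqrt{x}$ by $2/\log x$ reported in Section \ref{regularizedCB}.

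The equivalence with GRH then splits into two directions, both handled through the classical explicit formula for $\psi(x;q,a)$ in terms of the zeros of the Dirichlet $L$-functions modulo $q$. For the forward direction I would insert the explicit formula into $I(x)$ and into the cube-and-higher remainder; under GRH each zero $\rho=1/2+i\gamma$ contributes, with weight $1/\rho$, a term of size $O(\sqrt{x}/(|\rho|^{2}\log^{2}x))$, and the sum over zeros converges, giving $I(x)=O(\sqrt{x}/\log^{2}x)$, a factor $\log x$ smaller than the main term. Thus $\pi'(x;q,N)-\pi'(x;q,R)>0$ for all large $x$, the finitely many remaining values being settled by the computations of Section \ref{regularizedCB}. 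For the converse I would argue by contraposition via Landau's oscillation theorem: a zero of some $L(s,\kappa_q)$ with real part $\beta_0>1/2$ would contribute an oscillating term of order $x^{\beta_0}/\log x$ to $\pi'(x;q,N)-\pi'(x;q,R)$, dwarfing the $\sqrt{x}/\log x$ bias and forcing sign changes in contradiction with (ii); equivalently, constancy of sign pins the abscissa of convergence of the associated Mellin transform to a real singularity, which the functional equation can only reconcile with all zeros on $\Re(s)=1/2$.

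The main obstacle is this converse. Making the oscillation argument rigorous requires the Landau--Ingham machinery rather than a naive size comparison, and care is needed in the character bookkeeping: the single difference $\pi'(x;q,N)-\pi'(x;q,R)$ feels a zero of $L(s,\chi)$ only through the factor $\bar\chi(N)-\bar\chi(R)$, so for a general modulus one must verify that the chosen pair $(N,R)$ detects the real character $\kappa_q$ (for which this factor equals $-2$) and, more delicately, that no off-line zero of another character can hide in the difference. This is precisely why the prime-modulus variant of Proposition \ref{Chebbiasp} averages over all residues with Legendre-symbol weights, and the uniform $O(\sqrt{x}/\log^2 x)$ control of the full sum over zeros under GRH remains the chief technical hurdle.
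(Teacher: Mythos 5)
Your forward direction is essentially the paper's argument in different packaging. The paper quotes the Rubinstein--Sarnak expansion of $\pi(x;q,a)$ under GRH, in which the term $-\frac{c(q,a)}{\phi(q)}\frac{\sqrt x}{\log x}$ with $c(q,a)=-1+\#\{1\le b\le q: b^2\equiv a \bmod q\}$ is exactly the prime-square contribution you isolate through $\psi-\theta$, and the character sums $\sum_{\kappa\ne\kappa_0}\bar\kappa(a)\psi(x,\kappa)$ cancel between $\pi(x;q,a)$ and $\psi(x;q,a)/\log x$ precisely as you describe; both routes land on the main term $\frac{\sqrt x}{\log x}\left(c(q,R)-c(q,N)\right)$ up to $O(\sqrt x/\log^2 x)$. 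Two cautions. First, the crude bound $\theta(t;q,N)-\theta(t;q,R)=O(\sqrt t\log^2 t)$ inserted into your partial-summation integral only yields $O(\sqrt x)$, which swamps the main term, so the zero-by-zero integration by parts producing the extra factor $1/\abs{\rho}$ is not optional -- it is the whole content of the estimate. Second, like the paper, you only obtain positivity for $x\ge x_0$; ``settled by the computations of Section \ref{regularizedCB}'' is a gap unless $x_0$ is made effective and the verification actually reaches it, which neither you nor the paper does.

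The converse is where you genuinely diverge from the paper, and where the real difficulty sits. The paper applies Robin's $B(x;q,a)=\Omega_{\pm}(x^{\xi})$ to each residue class separately and runs a case analysis over pairs $(x_1,x_2)$; its final case (``simultaneously $B(x_1;q,N)<B(x_2;q,N)$ and $B(x_1;q,R)<B(x_2;q,R)$'') concludes nothing about the sign of $B(x;q,R)-B(x;q,N)$ at any single $x$, so the paper's own converse is incomplete. You instead propose Landau--Ingham oscillation applied directly to the difference, which is the sounder instinct, but you have correctly identified -- and not closed -- the decisive obstruction: an off-line zero of $L(s,\chi)$ enters the difference only through the factor $\bar\chi(N)-\bar\chi(R)$, so a violation of GRH by a character taking equal values at $N$ and $R$ is invisible to statement (ii). As stated for a general modulus and a single pair $(N,R)$, the inequality can at best detect zeros of those characters that separate $N$ from $R$ (this is precisely why Proposition \ref{Chebbiasp} averages over all residues with Legendre weights). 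This gap afflicts the paper's proof as well, but your proposal does not repair it either; as it stands, neither argument establishes the claimed equivalence in the direction (ii) $\Rightarrow$ GRH.
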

\begin{proof}
First observe that proposition \ref{Chebbias} follows from proposition \ref{Regbias}. This is straightforward because according to \cite[p. 260]{Robin84}, the prime number theorem for arithmetic progressions leads to the approximation 
\begin{equation}
\mbox{li}[\phi(q)\psi(x;q,l)]\sim \mbox{li}(x)+\frac{\phi(q)\psi(x;q,l)-x}{\log x}.
\label{equiv}
\end{equation}
As a result
\begin{eqnarray}
&\delta'(x,q)=B(x;q,R)-B(x;q,N)\nonumber\\
&=\mbox{li}[\phi(q)\psi(x;q,R)]-\mbox{li}[\phi(q)\psi(x;q,N)]+\phi(q)\delta(x,q)\nonumber\\
&\sim\phi(q)[\pi'(x;q,N)-\pi'(x;q,R)].\nonumber
\end{eqnarray}
The asymtotic equivalence in $(\ref{equiv})$ holds up to the error term \cite[p. 260]{Robin84} $O(\frac{R(x)}{x \log x})$, with
$$R(x)=\mbox{min} \left( x^{\theta_q}\log^2 x,xe^{-a\sqrt{\log x}}\right),~a>0,$$  
$$\theta_q=\mbox{max}_{\kappa \mod q}(\sup \Re (\rho),~ \rho ~\mbox{a}~\mbox{zero}~\mbox{of}~L(s,\kappa)).$$

Let us now look at the statement GRH $\Rightarrow (i)$.
Following \cite[p 178-179]{Rubin94}, one has
$$\psi(x;q,a)=\frac{1}{\phi(q)}\sum_{\kappa\mod q}\bar{\kappa}(a)\psi(x,\kappa)$$
and under GRH
$$\pi(x;q,a)=\frac{\pi(x)}{\phi(q)}-\frac{c(q,a)}{\phi(q)}\frac{\sqrt{x}}{\log x}+\frac{1}{\phi(q)\log x}\sum_{\kappa \ne \kappa_0}\bar{\kappa}(a)\psi(x,\kappa)+O(\frac{\sqrt{x}}{\log^2 x}),$$
where $\kappa_0$ is the principal character modulo $q$ and
$$c(q,a)=-1+\#\{1\le b\le q:b^2=a \mod q\}$$
for coprimes integers $a$ and $q$.
Note that for an odd prime $q=p$, one has $c(p,a)=\left(\frac{a}{p}\right).$

Thus, under GRH
\begin{eqnarray}
&\pi(x;q,N)-\pi(x;q,R)=\frac{1}{\phi(q) \log x}[\sqrt{x}(c(q,R)-c(q,N))\nonumber \\
&+\sum_{\kappa \mod q}
(\bar{\kappa}(N)-\bar{\kappa}(R))\psi(x,\kappa)+O(\left(\frac{\sqrt{x}}{\log^2x}\right)].
\label{piNR}
\end{eqnarray}
The sum could be taken over all characters because $\bar{\kappa}_0(N)=\bar{\kappa}_0(R)$.
In addition, we have
\begin{equation}
\psi(x;q,N)-\psi(x;q,R)=\frac{1}{\phi(q)}\sum_{\kappa \mod q}[\bar{\kappa}(N)-\bar{\kappa}(R)]\psi(x,\kappa).
\label{psiNR}
\end{equation}
Using (\ref{piNR}) and (\ref{psiNR}) the regularized bias reads
\begin{eqnarray}
&\delta'(x,q)\sim\pi'(x;q,N)-\pi'(x;q,R) \nonumber \\
&=\frac{\sqrt{x}}{\log x}[c(q,R)-c(q,N)]+O\left(\frac{\sqrt{x}}{\log^2 x}\right).
\label{regbias}
\end{eqnarray}
For the modulus $q=4$, we have $c(q,1)=-1+2=1$ and $c(q,3)=-1$ so that $\delta'(x,4)=\frac{2\sqrt{x}}{\log x}$ The same result is obtained for a prime modulus $q=p$ since $c(p,N)=-1$ and $c(p,R)=c(p,1)=\left(\frac{1}{p} \right)=1$.

This finalizes the proof that under GRH, one has the inequality  $\pi'(x;q,N)>\pi'(x;q,R)$.

If GRH does not hold, then using \cite[lemma 2]{Robin84}, one has
$$B(x;q,a)=\Omega_{\pm}(x^{\xi})~\mbox{for}~\mbox{any}~\xi<\theta_q.$$
Applying this assymptotic result to the residue classes $a=R$ and $a=N$, there exist infinitely many values $x=x_1$ and $x=x_2$ satisfying
$$B(x_1;q,R)<-x_1^{\xi}~\mbox{and}~B(x_2;q,N)>x_2^{\xi}~\mbox{for}~\mbox{any}~\xi<\theta_q,$$
so that one obtains 
\begin{equation}
B(x1;q,R)-B(x_2;q,N)<-x_1^{\xi}-x_2^{\xi}<0.
\label{in1}
\end{equation}
Selecting a pair $(x_1,x_2)$ either
$$B(x_1;q,R)>B(x_2;q,R)$$
so that $B(x_2;q,R)-B(x_2;q,N)<0$ and (i) is violated at $x_2$, or
\begin{equation}
B(x_1;q,R)<B(x_2;q,R) .
\label{in2}
\end{equation}
In the last case, either $B(x_1;q,N)>B(x_2;q,N)$, so that $B(x_1;q,R)-B(x_1;q,N)<0$ and the inequality (i) is violated at $x_1$,
or simultaneously
$$B(x_1;q,N)<B(x_2;q,N)~\mbox{and}~B(x_1;q,R)<B(x_2;q,R),$$
which implies $(\ref{in1})$ and the violation of (i) at $x=x_1=x_2$.
 
To finalize the proof of \ref{Regbias}, and simultaneously that of \ref{Chebbias}, one makes use of the asymptotic equivalence of (i) and (ii),
that is if GRH is true $\Rightarrow$ (ii) $\Rightarrow$ (i), and if GRH is wrong, (i) may be violated and (ii) as well.

Then, proposition \ref{Chebbiasp} also follows as a straigthforward consequence of proposition \ref{Chebbias}.

\end{proof}n 

\section{Summary}
We have found that the asymmetry in the prime counting function $\pi(x;q,a)$ between the quadratic residues $a=R$ and the non-quadradic residues $a=N$ for the modulus $q$ can be encoded in the function $B(x;q,a)$ [defined in (\ref{eq11})] introduced by Robin the context of GRH \cite{Robin84}, or into the regularized prime counting function $\pi'(x;q,a)$, as in Proposition \ref{Regbias}. The bias in $\pi'$ reflects the bias in $\pi$ conditionaly under GRH for the modulus $q$. Our conjecture has been initiated by detailed computer calculations presented in Sec. \ref{regularizedCB} and proved in Sec. \ref{proofCB}. Further work could follow the work about the connection of $\pi$, and thus of $\pi'$, to the sum of squares function $r_2(n)$ \cite{Moree2003}.


\begin{thebibliography}{99}



\bibitem{Bays00}
C. Bays and R.~H. Hudson, {\it A new bound for the smallest $x$ with $\pi(x)>\mbox{li}(x)$}, Math. Comp., {\bf 69}(231), 1285-1296 (2000).

\bibitem{Robin84}
G. Robin, {\it Sur la difference $\mbox{Li}(\theta(x))-\pi(x)$}, Ann. Fac. Sc. Toulouse {\bf 6} (1984) 257-268.



\bibitem{Rubin94}
M. Rubinstein and P. Sarnak, {\it Chebyshev's bias}, Exp. Math., {\bf 3}(3), 173-197 (1994).

\bibitem{Davenport80}
H. Davenport, {\it Multiplicative number theory, Second edition}, Springer Verlag, New York (1980).

\bibitem{Bays1979}
C. Bays and R.~H. Hudson, {\it Numerical and graphical description of all axis crossing regions for the moduli $4$ and $8$ which occurs before $10^{12}$}, Intern. J. Math. \& Math. Sci., {\bf 2}, 111-119 (1979).

\bibitem{Deleglise2000}
M. Del\'eglise, P. Dusart and X-F Boblot, {\it Counting primes in residue classes}, Math. Comp., {\bf 73} (247), 1565-1575 (2004).

\bibitem{I} A. E. Ingham, {\it The distribution of prime numbers}, Mathematical Library, Cambridge University Press, Cambridge, 1990,
(Reprint of the 1932 original).

\bibitem{Fiorilli2011}
D. Fiorilli and G. Martin, {\it Inequalities in the Shanks-R\'enyi prime number-race: an asymptotic formula for the densities}, Crelle's J. (to appear); Preprint 0912.4908 [math.NT].

\bibitem{Bays2001}
C. Bays, K. Ford, R.~H. Husdson and M. Rubinstein, {\it Zeros of Dirichlet $L$-functions near the real axis and Chebyshev's bias}, J. Numb. Th. {\bf 87}, 54-76 (2001).

\bibitem{Moree2003}
P. Moree, {\it Chebyshev's bias for composite numbers with restricted prime divisors}, Math. Comp. {\bf 73}, 425-449 (2003). 

\bibitem{Martin2000}
G. Martin, {\it Asymmetries in the Shanks-R\'enyi prime number race}, Number theory for the millenium, II (Urbana, IL, 2000), 403415, A.K. Pters, Natick, MA, 2002.

\bibitem{PlanatSole2011}
M. Planat and P. Sol\'e, {\it Efficient prime counting and the Chebyshev primes}, Preprint 1109.6489 (math.NT).

\bibitem{Magma}
W. Bosma, J. Cannon and C. Playoust, {\it The Magma algebra system. I. The user language}, J. Symb. Comput., {\bf 24}, 235-265 (1997).



\end{thebibliography}
\end{document}